\documentclass[12pt]{article}
\usepackage{amssymb,latexsym,amsfonts,amsmath,amsthm,verbatim,eucal,hyperref}

\def\PP{\mathbb{P}}
\def\EE{\mathbb{E}}

\def\RR{\mathbb{R}}
\def\NN{\mathbb{N}}

\def\1{\mathbf{1}}
\def\eps{\varepsilon}

\def\<{\langle}
\def\>{\rangle}

\def\mes{\operatorname{mes}}

\theoremstyle{plain}
\newtheorem{thm}{Theorem}
\newtheorem*{thm*}{Theorem}
\newtheorem{lemma}[thm]{Lemma}
\newtheorem*{lemma*}{Lemma*}

\newtheorem*{prop*}{Proposition}

\newtheorem*{cor*}{Corollary}

\newtheorem*{cl*}{Claim}
\theoremstyle{remark}

\newtheorem*{not*}{Notation}
\theoremstyle{definition}

\newtheorem*{ex*}{Example}

\newtheorem*{qn*}{Question}

\begin{document}

\title{Poisson asymptotics for random projections \\ of points on a high-dimensional sphere}
\author{Itai Benjamini$^1$, Oded Schramm$^2$, and Sasha Sodin$^3$}
\maketitle

\begin{abstract}
Project a collection of points on the high-dimensional sphere onto
a random direction. If most of the points are sufficiently far from one
another in an appropriate sense, the projection is locally
close in distribution to the Poisson point process.
\end{abstract}

\footnotetext[1]{Weizmann Institute, itai.benjamini@weizmann.ac.il}
\footnotetext[2]{Oded died  while solo climbing Guye Peak in Washington State on 
September 1, 2008.}
\footnotetext[3]{Tel Aviv University, sodinale@post.tau.ac.il.
Supported in part by the Adams Fellowship Program of the Israel
Academy of Sciences and Humanities and by the ISF.}

\section{Introduction}

Let $x_1, x_2, \cdots, x_n$ be $n$ points on the $(d-1)$-dimensional sphere $S^{d-1}$.
We assume that $n$, $d$, and the points themselves depend on an implicit parameter, so that
$d \to \infty$, $n \to \infty$. Consider the normalised projections $\< x_j, \sqrt{d} U \>$
of the points onto a random direction $U$ on the sphere.

\vspace{1mm}\noindent
Fix $a \in \RR$, and denote
\[ \xi_{a} = \xi_a(U) = \sum_{j=1}^n \delta \Big(\bullet - n(\< x_j, \sqrt{d}\, U\>-a) \Big)~. \]
This is a {\em point process}, i.e.\ a random locally finite integer-valued Borel measure on $\RR$.
The (homogeneous) Poisson process with intensity $\lambda$ ($\lambda > 0$) is a point process $\eta$
such that
\begin{equation}\label{eq:.5}
\eta(B) \sim \mathrm{Pois}(\lambda \cdot \mes B)
\end{equation}
for any Borel set $B \subset \RR$. The reader may find further properties of Poisson point processes
in the book of Reiss \cite{R}.

\begin{thm*} Assume that, for any $\eps > 0$,
\begin{equation}\label{eq:0''}
    {\sum}^{\eps} \min \left( \frac{1}{\sqrt{1 - |\< x_i, x_j \>|}}, \,\,  n \right) = o(n^2)~,
\end{equation}
where the sum is over all pairs $i < j$ such that $|\< x_i, x_j\>| \geq \eps$. Then, for any
$a \in \RR$, $\xi_a$ converges to the Poisson process with intensity
\[ \phi(a) = (2\pi)^{-1/2} \exp(-a^2/2)~, \]
in the following sense\footnote{that is stronger than weak convergence in distribution}:
for any bounded Borel set $I \Subset \RR$,
\[ \xi_a(I) \overset{D}{\longrightarrow} \mathrm{Pois}(\phi(a) \, \mes I)~.\]
\end{thm*}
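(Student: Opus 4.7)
The plan is to prove Poisson convergence by the method of factorial moments. Writing $Y_j = \sqrt d\,\langle x_j,U\rangle$ and $J_n = a + n^{-1}I$, one has $\xi_a(I) = \#\{j : Y_j\in J_n\}$; since a Poisson random variable with mean $\lambda$ is uniquely determined by its factorial moments $\lambda^k$, it suffices to show
\[
\EE\,\xi_a(I)^{[k]} \;=\; \sum_{(j_1,\dots,j_k)\text{ distinct}} \PP(Y_{j_1},\dots,Y_{j_k}\in J_n) \;\longrightarrow\; (\phi(a)\,\mes I)^k
\]
for every $k\ge 1$.

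The essential input is a density estimate for the joint law of $(Y_{j_1},\dots,Y_{j_k})$. By rotational invariance of $U$, if $X = (x_{j_1}|\cdots|x_{j_k})$ has QR-decomposition $X = QR$, then $(Y_{j_1},\dots,Y_{j_k})^\top \stackrel{d}{=} \sqrt d\,R^\top W$, where $W \in \RR^k$ consists of the first $k$ coordinates of $U$, whose density $c_{d,k}(1-|w|^2)_+^{(d-k-2)/2}$ rescales to a density on $\RR^k$ converging uniformly on compacta to the standard $k$-dimensional Gaussian. Consequently the joint density of $(Y_{j_1},\dots,Y_{j_k})$ at $(a,\dots,a)$ converges to the Gaussian density with covariance $G = X^\top X$ evaluated there, and is controlled by $C_k(a)(\det G)^{-1/2}$. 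Specializing to $k=2$ with $c = \langle x_i,x_j\rangle$, and combining with the trivial marginal bound $\PP(Y_i,Y_j\in J_n)\le C\,\mes I/n$, one obtains the crucial pair estimate
\[
\PP(Y_i,Y_j\in J_n) \;\le\; C\,(\mes I/n)^2 \min\!\big((1-|c|)^{-1/2},\,n\big).
\]

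With these ingredients, the first moment is immediate from the marginal density asymptotic. For the second factorial moment, split the sum over pairs into near-orthogonal pairs ($|\langle x_i,x_j\rangle|<\eps$) and close pairs ($|\langle x_i,x_j\rangle|\ge\eps$): near-orthogonal pairs yield $(\phi(a)\mes I)^2 + O(\eps)$ via the joint density asymptotic (since $G\to I_2$ uniformly over such pairs), while close pairs contribute $o(1)$ by the display and hypothesis \eqref{eq:0''}; letting $\eps\to 0$ finishes the step. For higher $k$, partition $k$-tuples analogously into \emph{good} ones (all pairwise inner products $<\eps$) and \emph{bad} ones. Good tuples produce the main term $(\phi(a)\mes I)^k$ (again $G\to I_k$ uniformly), while bad tuples are controlled by extracting one close pair, applying the pair estimate to it, and bounding the contribution of the remaining $k-2$ indices via Fischer-type estimates on $\det G$ together with the fact that the conditional distribution of $U$ on the sphere orthogonal to $\mathrm{span}\{x_{j_1},x_{j_2}\}$ has, after rescaling, a density bounded by a universal constant.

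The main obstacle is the close-pair analysis for the second moment: the two-sided truncation $\min((1-|c|)^{-1/2},\,n)$ emerging from the combination of the density and marginal bounds is exactly the weight appearing in \eqref{eq:0''}, so the hypothesis is tailored to neutralise both the density blow-up as $|c|\to 1$ and the possible proliferation of near-coincident pairs. Extending to $k\ge 3$ is conceptually the same computation with additional combinatorial bookkeeping, chiefly to verify that $k$-tuples containing several simultaneously close pairs contribute negligibly—something that reduces, via the elementary bound $\#\{\text{close pairs}\} = o(n^2)$ already guaranteed by \eqref{eq:0''}, to a counting exercise.
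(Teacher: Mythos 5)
Your overall strategy --- factorial moments, the Archimedes-type joint density (your QR derivation is the same statement as the paper's Lemma~\ref{lemma}), and the pair bound $\PP\{Y_i,Y_j\in a+n^{-1}I\}\le C_I\min\bigl(n^{-2}(1-|c|)^{-1/2},\,n^{-1}\bigr)$ matched against the hypothesis --- coincides with the paper's treatment of $k=1,2$ and of the good tuples at every order. The genuine gap is in your treatment of bad $k$-tuples for $k\ge 3$. You propose to bound their contribution to the unconditional $k$-th factorial moment by extracting one close pair, applying the pair estimate to it, and charging the remaining $k-2$ indices $O(n^{-(k-2)})$ via determinant factorizations and a conditional density bound. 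This fails whenever some $x_{j_s}$, $s\ge 3$, lies near $\operatorname{span}\{x_{j_1},x_{j_2}\}$: the relevant factor of $\det G$ is the Schur complement of the $2\times 2$ block, which can vanish even when all the inner products $\<x_{j_s},x_{j_1}\>,\<x_{j_s},x_{j_2}\>$ are small (take $x_{j_3}$ proportional to $x_{j_1}-x_{j_2}$ for a very close pair; then $Y_{j_3}$ is essentially a deterministic function of $Y_{j_1}-Y_{j_2}$ and $\PP\{Y_{j_3}\in a+n^{-1}I\mid Y_{j_1},Y_{j_2}\in a+n^{-1}I\}$ can be of order $1$, not $O(1/n)$). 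Your fallback --- ``reduce to $\#\{\text{close pairs}\}=o(n^2)$ and count'' --- is not enough: counting alone bounds the bad contribution by $o(n^{k})\cdot\sup\PP$, and over bad tuples $\sup\PP$ is only $O(1/n)$ (all $k$ points nearly coincident), which gives $o(n^{k-1})$ rather than $o(1)$. Excluding such configurations genuinely requires the weight $\min\bigl((1-|c|)^{-1/2},n\bigr)$ of \eqref{eq:0''} applied to the several close pairs a degenerate tuple induces; that is real multi-point estimation, not bookkeeping.

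The paper sidesteps all of this with one device your proposal is missing: introduce the single global event
\[ A=\bigl\{\exists\, j\ne j' :\ |\<x_j,x_{j'}\>|\ge\eps,\ \<x_j,\sqrt d\,U\>,\<x_{j'},\sqrt d\,U\>\in a+n^{-1}I\bigr\}~, \]
with $\eps\to0$ chosen so that \eqref{eq:0''} still holds along the sequence. A union bound over pairs --- exactly your pair estimate summed against \eqref{eq:0''}, with no higher correlations needed --- gives $\PP(A)=o(1)$. On $\neg A$ every bad tuple contributes exactly zero to every factorial moment, so the Step~1 computation over good tuples applies to the conditional law of $N_I$ given $\neg A$, and since $\PP(\neg A)\to1$ the unconditional law has the same Poisson limit. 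Replacing your bad-tuple analysis by this conditioning closes the gap; the rest of your argument matches the paper.
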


That is, the random variables $\< x_j, \sqrt{d}\, U\>$ behave locally as independent samples
from the Gaussian distribution.

\begin{ex*} It is not hard to see that the condition (\ref{eq:0''}) is fulfilled
for the vertices of the discrete cube:
\begin{equation}\label{eq:cube}
n = 2^d~, \quad x_j = (\pm 1/\sqrt{d}, \cdots, \pm 1/\sqrt{d})~.
\end{equation}
\end{ex*}

\section{Proof of Theorem}\label{s:p}

The proof is based on the following elementary (and well-known) lemma, sometimes
referred to as Archimedes' theorem. The lemma follows from the fact (discovered by
Archimedes for $d = 3$) that the projection of the uniform measure on $S^{d-1}$ onto
a $(d-2)$-dimensional subspace is the uniform measure on the unit ball of this subspace.
\begin{lemma}\label{lemma}
Let $y_1, \cdots, y_k$ be pairwise distinct points on the sphere $S^{d-1}$, $k \leq d-2$.
The projections $H_j = \< y_j, \sqrt{d}U \>$ of $y_j$ on a random direction have
joint density
\[ p(h) = \frac{\Gamma(d/2)}{\Gamma((d-k)/2)} \, \frac{1}{(\pi d)^{k/2}} \,\,
    {\det}^{-1/2} M \,\, (1 - |M^{-1/2}h|^2/d)_+^{\frac{d-k-2}{2}}~,\]
where $M_{j j'} = \< y_j, y_{j'} \>$.
\end{lemma}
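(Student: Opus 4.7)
The plan is to reduce the joint density of $(H_1,\dots,H_k)$ to the density of the orthogonal projection of $U$ onto the $k$-dimensional subspace $V=\operatorname{span}(y_1,\dots,y_k)$, and then apply a generalized version of Archimedes' theorem. Throughout, I assume that $y_1,\dots,y_k$ are linearly independent (so that $M$ is invertible and a density exists); pairwise distinct points on $S^{d-1}$ with $k\leq d-2$ can be put in this situation after a perturbation and limit, or we may treat the lower-rank case as a degenerate version.

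First I would recall the generalized Archimedes' theorem: if $U$ is uniform on $S^{d-1}$ and $W\subset\mathbb{R}^d$ is any $k$-dimensional subspace with $k\leq d-2$, then the orthogonal projection $\tilde U$ of $U$ onto $W$ has density
\[
q(v)=\frac{\Gamma(d/2)}{\Gamma((d-k)/2)\,\pi^{k/2}}\,(1-|v|^2)_+^{(d-k-2)/2}
\]
with respect to the $k$-dimensional Lebesgue measure on $W$. This is the standard fact that for $U$ uniform on $S^{d-1}$ the first $k$ coordinates $(U_1,\dots,U_k)$ have density proportional to $(1-|u|^2)^{(d-k-2)/2}$ on the unit ball, with the usual $\Gamma$-function normalization; it is rotation-invariant, hence valid for any subspace $W$. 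The case $k=1$, $d=3$ is Archimedes'. I would either quote this or sketch its derivation via the surface-area formula.

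Next I would take $W=V=\operatorname{span}(y_1,\dots,y_k)$ and observe that $H_j=\sqrt{d}\,\langle y_j,U\rangle=\sqrt{d}\,\langle y_j,\tilde U\rangle$, so $H$ is a deterministic linear function of $\tilde U$. Writing $\tilde U=\sum_i c_i y_i$ with $c\in\mathbb{R}^k$, one has $\langle y_j,\tilde U\rangle=(Mc)_j$, so $H=\sqrt{d}\,Mc$, equivalently $c=M^{-1}H/\sqrt{d}$, and
\[
|\tilde U|^2=c^{\top}Mc=\frac{1}{d}\,H^{\top}M^{-1}H=\frac{|M^{-1/2}h|^2}{d}.
\]
This gives the factor $(1-|M^{-1/2}h|^2/d)_+^{(d-k-2)/2}$ in the required formula.

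Finally I would compute the Jacobian of the map $\tilde U\mapsto H$ between $V$ and $\mathbb{R}^k$. Choosing an orthonormal basis $e_1,\dots,e_k$ of $V$ and letting $A$ be the $k\times k$ matrix with entries $A_{ji}=\langle y_j,e_i\rangle$, one has $H=\sqrt{d}\,A\tilde U$ (in coordinates) and $M=AA^{\top}$, so $|\det A|=(\det M)^{1/2}$. Hence the $k$-dimensional Lebesgue measure on $V$ pulls back to $d^{-k/2}(\det M)^{-1/2}\,dh$ on $\mathbb{R}^k$. Multiplying this Jacobian by the pushed-forward density of $\tilde U$ produces exactly the claimed expression for $p(h)$. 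The only real bookkeeping hazard is the normalization constant and the Jacobian; everything else is a direct substitution once generalized Archimedes is in hand.
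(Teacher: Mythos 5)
Your proposal is correct and follows exactly the route the paper indicates: the paper offers no detailed proof, only the remark that the lemma follows from Archimedes' projection theorem, and your argument fills this in the natural way (generalized Archimedes for the projection onto $\operatorname{span}(y_1,\dots,y_k)$, then the linear change of variables $h=\sqrt{d}\,Au$ with $M=AA^{\top}$, so the Jacobian contributes $d^{-k/2}{\det}^{-1/2}M$ and $|u|^2=|M^{-1/2}h|^2/d$). All the bookkeeping checks out, and your caveat about linear independence is a reasonable (indeed necessary) reading of the statement, since for linearly dependent $y_j$ no joint density exists.
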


\noindent Let $I \Subset \RR$, and fix $k \in \NN$. Denote by $N_I$ the number
of points of $\xi_a$ in $I$, $N_I = \xi_a(I)$. Let us show that $N_I$ converges
in distribution to the Poisson law.

\vspace{2mm} \noindent {\bf Step 1:} First, let us assume that
\begin{equation}\label{eq:str}
\max_{j \neq j'} |\<x_j, x_{j'}\>| = o(1)~.
\end{equation}
Then proceed as follows:
\begin{equation}\label{eq:1}
\begin{split}
\EE \binom{N_I}{k}
    &= \EE \sum_{1 \leq j_1 < \cdots < j_k \leq n}
        \prod_{s=1}^k \1_I (n(\< x_{j_s}, \sqrt{d}\, U\>-a)) \\
    &= \sum \EE \prod_s \1_{a + n^{-1}I}(\< x_{j_s}, \sqrt{d}\, U\>)~.
\end{split}
\end{equation}
Denote $H_s = \< x_{j_s}, \sqrt{d}\, U\>$, and let $M_{ss'} = \< x_{j_s}, x_{j_{s'}}\>$.
According to Lemma~\ref{lemma}, the joint density of $H = (H_1, \cdots, H_k)$ is equal to
\[ p(h) = \frac{\Gamma(d/2)}{\Gamma((d-k)/2)} \, \frac{1}{(\pi d)^{k/2}} \,
    {\det}^{-1/2} M \, (1 - |M^{-1/2}h|^2/d)_+^{\frac{d-k-2}{2}}~.\]

Now, according to (\ref{eq:str}), $M = \1 + o(1)$, where the $o(1)$ term tends to zero
entry-wise and hence also in norm (recall that $k$ is fixed). Thus
\[ p(h) = \frac{\Gamma(d/2)}{\Gamma((d-k)/2)} \, \frac{1}{(\pi d)^{k/2}} \,
    (1 - (1+o(1))|h|^2/d)_+^{\frac{d-k-2}{2}} \, (1+o(1))~,\]
where the $o(1)$ term is uniform in $h$. Recalling that $d \to \infty$ (whereas $k$ is fixed),
we see that
\[\begin{split}
p(h) &= \frac{\Gamma(d/2)}{\Gamma((d-k)/2)} \, \frac{1}{(\pi d)^{k/2}} \,
    \exp(- |h|^2/2) \,\, (1 + o(1))  \\
     &= (2\pi)^{-k/2} \, \exp(- |h|^2/2) \,\, (1 + o(1))~,
\end{split}\]
uniformly on compact subsets of $\RR^k$. Therefore
\[ \EE \prod_{s=1}^k \1_{a + n^{-1}I}(H_s) = (\gamma(a + n^{-1}I))^k \, (1+o(1))~, \]
where $\gamma = N(0, 1)$ is the standard Gaussian measure. The set $I$ is bounded and fixed,
whereas $n \to \infty$, hence
\[ \EE \prod_{s=1}^k \1_{a + n^{-1}I}(H_s)
    = \left( \phi(a) \frac{\mes I}{n} \right)^k (1 + o(1))~.\]
Returning to (\ref{eq:1}), we deduce:
\begin{equation}\label{eq:2}
\EE \binom{N_I}{k} = \binom{n}{k} \left( \phi(a) \frac{\mes I}{n} \right)^k (1 + o(1))
    = \frac{(\phi(a)\mes I)^k}{k!} (1+o(1))~.
\end{equation}
That is, the factorial moments of $N_I$ tend to those of the Poisson distribution
$\textrm{Pois}(\phi(a)\mes I)$. The Poisson distribution has (better than) exponential tails, thus
$N_I$ converges in distribution to $\textrm{Pois}(\phi(a) \mes I)$.

\vspace{2mm}\noindent
{\bf Step 2}: Now let us relax the assumption (\ref{eq:str}). First, (\ref{eq:0''})
implies that one can choose $\eps \to 0$ so that
\begin{equation}\label{eq:0''+}
    {\sum}^\eps \min \left( \frac{1}{\sqrt{1 - |\< x_i, x_j \>|}}, \,\,  n \right) = o(n^2)~.
\end{equation}
Let
\[ p_I(x, x') = \PP \left\{ \<x, \sqrt{d} U\>, \<x', \sqrt{d} U\> \in a + n^{-1} I\right\}~. \]

\begin{lemma}
$p_I(x, x') \leq C_I \min \left( n^{-2} (1 - |\< x, x' \>|)^{-1/2}, n^{-1} \right)$.
\end{lemma}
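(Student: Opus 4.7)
The plan is to apply Lemma~\ref{lemma} with $k = 2$ to the pair $y_1 = x$, $y_2 = x'$, read off the joint density of $(\< x, \sqrt{d}U\>, \< x', \sqrt{d}U\>)$, and bound it uniformly. Writing $\rho = \< x, x'\>$, the Gram matrix $M$ has determinant $1 - \rho^2$, so the lemma exhibits the density as a product of three pieces: a $d$-dependent prefactor which (via $\Gamma(d/2)/\Gamma((d-2)/2) = (d-2)/2$) simplifies to at most $1/(2\pi)$; the factor $(1-\rho^2)^{-1/2}$ coming from $\det^{-1/2}M$; and a support factor $(1 - |M^{-1/2}h|^2/d)_+^{(d-4)/2}$, which is bounded by $1$ for $d \geq 4$. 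The upshot is the uniform pointwise bound
\[
p(h) \leq \frac{1}{2\pi\sqrt{1-\rho^2}},
\]
and the elementary inequality $1-\rho^2 = (1-|\rho|)(1+|\rho|) \geq 1 - |\rho|$ further sharpens the denominator to $\sqrt{1-|\rho|}$.

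Integrating this density bound over the square $(a + n^{-1}I)^2$, whose Lebesgue measure is $(\mes I)^2/n^2$, immediately produces the first half of the minimum, $p_I(x, x') \leq (\mes I)^2 / (2\pi n^2 \sqrt{1 - |\<x, x'\>|})$. For the second half I would discard the constraint on $\< x', \sqrt{d}U\>$ and estimate the marginal probability $\PP\{\< x, \sqrt{d}U\> \in a + n^{-1}I\}$ by means of Lemma~\ref{lemma} with $k = 1$; its one-dimensional density is bounded by an absolute constant uniformly in $d$, giving a contribution of order $\mes I / n$. Choosing $C_I$ to be the larger of the two implicit constants then yields the claimed minimum.

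I do not foresee any serious obstacle; both halves are direct density estimates that follow by inspection once the $k=2$ and $k=1$ instances of Archimedes' lemma are in hand. The only mild subtlety is the degenerate case $|\rho| = 1$, i.e.\ $x = \pm x'$, in which $M$ is singular and the $k = 2$ form of Lemma~\ref{lemma} breaks down --- but this is precisely the regime where the $n^{-1}$ bound is the binding one in the minimum, so those pairs can be handled by the one-dimensional marginal alone.
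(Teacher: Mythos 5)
Your proposal is correct and follows essentially the same route as the paper: bound the joint density from Lemma~\ref{lemma} with $k=2$ by a constant times $(1-|\<x,x'\>|)^{-1/2}$ and integrate over the square $(a+n^{-1}I)^2$ for the first bound, then drop one constraint and use the bounded one-dimensional marginal for the second. Your explicit tracking of the constants and your remark about the degenerate case $x=\pm x'$ are fine additions but not substantively different from the paper's argument.
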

\begin{proof}
By Lemma~\ref{lemma}, the joint density of $\<x, \sqrt{d} U\>, \<x', \sqrt{d} U\>$ is given by
\[ p(h) = \frac{\Gamma(d/2)}{\Gamma((d-2)/2)} \, \frac{1}{\pi d} \, {\det}^{-1/2} M \,
    \left( 1 - |M^{-1/2} h|^2 / d \right)^{\frac{d-4}{2}}~,\]
where
\[ M = \left(
    \begin{array}{cc} 1 &\< x, x' \> \\ \<x, x'\> &1 \end{array} \right)~.\]
Therefore
\[ {\det}^{-1/2} M = (1 - \< x, x'\>^2)^{-1/2} = O(1) \cdot (1 - |\< x, x'\>|)^{-1/2}~,\]
and $p(h) = O(1) \, (1 - |\< x, x'\>|)^{-1/2}$. Thus
\[ p_I(x, x') = \iint_{(a + n^{-1}I)^2} p(h) dh
    \leq C_I n^{-2} (1 - |\< x, x'\>|)^{-1/2}~. \]
Also,
\[ p_I(x, x') \leq \PP \left\{ x \in a + n^{-1}I \right\} \leq C_I n^{-1}~.\]
\end{proof}

\noindent According to the lemma and (\ref{eq:0''+}), $\PP(A) = o(1)$, where
\[ A = \left\{ \exists j \neq j' \,\, \big| \,\, |\< x_j, x_{j'} \>| \geq \eps, \,\,
    \<x_j, \sqrt{d} U\>, \<x_{j'}, \sqrt{d}U\> \in a + n^{-1}I\right\}~. \]
Repeating the argument of Step~1, we see that the conditional distribution of $N_I$ given
$\neg A$ (the negation of $A$) tends to $\textrm{Pois}(\phi(a) \, \mes I)$. Thus the same is true for
$N_I$ itself.

\qed

\section{Some remarks}

\newcounter{remnr}
\def\rem{
    \addtocounter{remnr}{1}
    \vspace{2mm}\noindent{\bf \arabic{remnr}.} }

\rem Diaconis and Freedman \cite{DF} have proved the following: if, for any $\eps > 0$,
\begin{eqnarray}
\label{eq:df.1}
    &&\# \left\{ j \, | \, \big||x_j|^2 - 1\big| > \eps \right\} = o(n)~, \\
\label{eq:df.2}
    &&\# \left\{ j,k \, | \, \big|\<x_j, x_k\>\big| > \eps \right\} = o(n^2)~,
\end{eqnarray}
then the empirical distribution
\[ n^{-1} \, \sum_j \delta \big( \bullet - \< x_j, \sqrt{d}U \> \big)\]
converges (weakly, in distribution) to the standard Gaussian law. Our result
can be seen as a local version of this statement.

\rem The conditions (\ref{eq:df.1})-(\ref{eq:df.2}) are not sufficient for the conclusion
of our theorem, as one can see from the following example:
\[ (x_1, x_2, \cdots, x_n) = (e_1, \cdots, e_d, e_1, \cdots, e_{\lfloor \delta d \rfloor})\]
(where $(e_1, \cdots, e_d)$ is the standard basis in $\RR^d$, and $0 < \delta < 1$ is an arbitrary
constant.)

\rem The assumption that $x_j \in S^{d-1}$ in our theorem can be relaxed. For example, the
Diaconis--Freedman assumption (\ref{eq:df.1}) is sufficient for $a \neq 0$.

\rem For any $\delta > 0$, one can construct a $\delta$-net on $S^{d-1}$ for which the
assumption (\ref{eq:0''}) is satisfied. Indeed, if the distribution of the points in the net
is sufficiently regular,
\[\begin{split} {\sum}^\eps \frac{1}{\sqrt{1- |\<x_i, x_j \>|}}
    &\approx n^2 \displaystyle\iint_{|\<x, y\>| \geq \eps}
        \frac{d\sigma_d(x) d\sigma_d(y)}{\sqrt{1 - |\<x, y\>|}} = o(n^2)
\end{split}\]
as $d\to\infty$.

\rem The theorem and the proof can be easily extended to random projections onto
an $r$ dimensional subspace, where $r$ is any number (fixed, or slowly growing with $d$).

\rem One may ask whether it is possible to reduce the randomness in the conclusion of
the theorem, and still have (at least, weak) convergence to the Poisson process.
For example, one may project the point $x_j$ onto a random Bernoulli direction
$B = (\pm 1, \cdots, \pm 1)$. Even for the points (\ref{eq:cube}), the limit will not be
Poisson, since all the projections will be integer multiples of $1/\sqrt{d}$.
Instead, one can consider a random perturbed Bernoulli direction:
$B_\eps = (\pm (1+\eps_1), \cdots, \pm (1+\eps_d))$. Is it true that the projections
of the points (\ref{eq:cube}) are asymptotically Poisson for a `generic' perturbation
$\eps$? Is there a natural arithmetic condition on $\eps$ that ensures that the projections
are asymptotically Poisson?

\rem It may also be interesting to consider projections of points $\{x_j\}$ for which
the condition (\ref{eq:0''}) is violated. Which point processes can appear in the
limit, as $n,d \to \infty$?

\vspace{2mm} \noindent
{\bf Acknowledgment.} We are grateful to Ron Peled and Ofer Zeitouni for their helpful
remarks.

\end{document}